\newcommand{\kom}[1]{}
\renewcommand{\kom}[1]{{\bf [#1]}}
 \def\1{\raisebox{2pt}{\rm{$\chi$}}}
\newtheorem{theorem}{Theorem}[section]
\newtheorem{corollary}[theorem]{Corollary}
\newtheorem{lemma}[theorem]{Lemma}
\newtheorem{proposition}[theorem]{Proposition}
\newcommand{\RR}{{\mathbb R}}
\newcommand{\N}{{\mathbb N}}
 \def\1{\raisebox{2pt}{\rm{$\chi$}}}
\newcommand{\norm}[1]{\left|\left|#1\right|\right|}
\def\vint_#1{\mathchoice%
          {\mathop{\kern 0.2em\vrule width 0.6em height 0.69678ex depth -0.58065ex
                  \kern -0.8em \intop}\nolimits_{\kern -0.4em#1}}%
          {\mathop{\kern 0.1em\vrule width 0.5em height 0.69678ex depth -0.60387ex
                  \kern -0.6em \intop}\nolimits_{#1}}%
          {\mathop{\kern 0.1em\vrule width 0.5em height 0.69678ex
              depth -0.60387ex
                  \kern -0.6em \intop}\nolimits_{#1}}%
          {\mathop{\kern 0.1em\vrule width 0.5em height 0.69678ex depth -0.60387ex
                  \kern -0.6em \intop}\nolimits_{#1}}}
\def\vintslides_#1{\mathchoice%
          {\mathop{\kern 0.1em\vrule width 0.5em height 0.697ex depth -0.581ex
                  \kern -0.6em \intop}\nolimits_{\kern -0.4em#1}}%
          {\mathop{\kern 0.1em\vrule width 0.3em height 0.697ex depth -0.604ex
                  \kern -0.4em \intop}\nolimits_{#1}}%
          {\mathop{\kern 0.1em\vrule width 0.3em height 0.697ex depth -0.604ex
                  \kern -0.4em \intop}\nolimits_{#1}}%
          {\mathop{\kern 0.1em\vrule width 0.3em height 0.697ex depth -0.604ex
                  \kern -0.4em \intop}\nolimits_{#1}}}
\newcommand{\intav}{\vint}
\newcommand{\aveint}[2]{\mathchoice%
          {\mathop{\kern 0.2em\vrule width 0.6em height 0.69678ex depth -0.58065ex
                  \kern -0.8em \intop}\nolimits_{\kern -0.45em#1}^{#2}}%
          {\mathop{\kern 0.1em\vrule width 0.5em height 0.69678ex depth -0.60387ex
                  \kern -0.6em \intop}\nolimits_{#1}^{#2}}%
          {\mathop{\kern 0.1em\vrule width 0.5em height 0.69678ex depth -0.60387ex
                  \kern -0.6em \intop}\nolimits_{#1}^{#2}}%
          {\mathop{\kern 0.1em\vrule width 0.5em height 0.69678ex depth -0.60387ex
                  \kern -0.6em \intop}\nolimits_{#1}^{#2}}}
\newcommand{\re}{\mathbb{R}}
\newcommand{\rn}{\mathbb{R}^n}
\begin{document}
\title[
the fractional maximal function of a radial function
]
{The variation of the fractional maximal function of a radial function
}
\author{Hannes Luiro and Jos{\'e} Madrid}

\date{\today}
\subjclass[2010]{42B25, 26A45, 46E35, 46E39.}
\keywords{Fractional maximal operator, Sobolev spaces, Radial functions}

\address{Department of Mathematics and Statistics, University of Jyvaskyla,P.O.Box 35 (MaD), 40014 University of Jyvaskyla, Finland}
\email{hannes.s.luiro@jyu.fi}
\address{Department of Mathematics, Aalto University, P.O. Box 11100, FI--00076 Aalto University, Finland}
\email{jose.madridpadilla@aalto.fi}
\address{The Abdus Salam International Centre for Theoretical Physics, Str. Costiera 11, 34151 Trieste, Italy}
\email{jmadrid@ictp.it}

\maketitle
\begin{abstract}In this paper we study the regularity of the non-centered fractional maximal operator $M_{\beta}$. As the main result, we prove that   
there exists $C(n,\beta)$ such that if $q=n/(n-\beta)$ and $f$ is \textit{radial function}, then 
$\|DM_{\beta}f\|_{L^{q}(\rn)}\leq C(n,\beta)\|Df\|_{L^{1}(\rn)}$. 
The corresponding result was previously known only if $n=1$ or $\beta=0$. Our proofs are almost free from one-dimensional arguments. Therefore, we believe that the new approach may be very useful when trying to extend the result for all $f\in W^{1,1}(\rn)$.  
\end{abstract}

\section{Introduction}
The non-centered fractional Hardy-Littlewood maximal operator $M_{\beta}$ is defined by
setting for $f\in L^1_{loc}(\rn)\,$ and $0\leq \beta<n\,$ that
\begin{equation}\label{eq:1}
M_{\beta} f(x):=\sup_{B(z,r)\ni x}
\frac{r^\beta}{|B(z,r)|}\int_{B(z,r)}|f(y)|\,dy\,=:\,\sup_{B(z,r)\ni x}r^{\beta}\intav_{B(z,r)}|f(y)|\,dy\,
\end{equation}
for every $x\in\rn\,$. The centered version of $M_{\beta}$, denoted by $M^{c}_\beta$, is defined by taking the supremum over all balls centered at $x$. In the non-fractional case $\beta=0$, we also denote $M_0=M$.


The study of the regularity of maximal operators has strongly attracted the attention of many authors in recent years. The boundedness of the classical maximal operator on the Sobolev space $W^{1,p}(\rn)$ for $p>1$ was established by Kinnunen in \cite{Ki}. The analogous result in the fractional context was established by Kinnunen and Saksman in \cite{KiSa}: for every $0<\beta<n$ we have that $M_{\beta}$ is bounded from $W^{1,p}(\rn)$ to $W^{1,q}(\rn)$ under the relation $1/q=1/p-\beta/n$ (if $p>1$).
For  other interesting results on this theory we refer to \cite{BCHP}, \cite{CFS}, \cite{CaHu}, \cite{CMP}, \cite{CaMo}, \cite{CaSv}, \cite{HM}, \cite{HO}, \cite{L}, \cite{Ma} and \cite{R}.

The case $p=1$ is particularly complicated and interesting. In the case $n=1$
it is known  (see \cite{Ta} and \cite{AlPe} for the Non-Centered, and \cite{Ku} for the Centered) that $Mf$ is weakly differentiable and
\begin{equation}\label{n=1 clasico}
\|DMf\|_{L^{1}(\mathbb{R})}\leq C\|Df\|_{L^{1}(\mathbb{R})},
\end{equation}
 but even in this case there are still some interesting open questions. The proofs of these theorems strongly exploit the simplicity of one-dimensional topology. Indeed, the situation in higher dimension is quite unknown, only a few results have been obtained (see \cite{L2}, \cite{S}).
\noindent
The analogous result to \eqref{n=1 clasico} for the fractional non-centered maximal operator was established by Carneiro and Madrid in \cite{CaMa}. In full generality the next question was posed by them.

\noindent {\bf Main Question.} Let $0 \leq \beta <n$ and $q = n/(n-\beta)$. Is the operator $f \mapsto |D M_{\beta}f|$ bounded from $W^{1,1}(\rn)$ to $L^q(\rn)$? 
\smallskip

The problem can be rather easily reduced to the case $0 \leq \beta <1$, as it was also observed by Carneiro and Madrid (see \cite{CaMa}). Indeed, in the case
$1 \leq \beta < d$, the positive answer follows by combining the boundedness property of the fractional maximal operator from $L^{p}$ to $L^{q}$ (under the condition $1/q=1/p-\beta/n$), the Sobolev embedding Theorem and the result  in \cite{KiSa} which says:
 If $f \in L^{r}(\rn)$ with $1 < r < n$ and $1 \leq \beta < n/r$, then $M_\beta f$ is weakly differentiable and 
$$\left| D M_{\beta} f(x)\right| \leq C(n,\beta) \,M_{\beta -1}f(x)\,\text{ for a.e.  }x\in\rn\,.$$


In the case $\beta=0$ (non-fractional operator) the main question for radial functions was recently proven by Luiro  \cite{L2}. Our main Theorem is a counterpart of this result in the case $\beta>0$.
\begin{theorem}[Main Theorem]\label{Main Theorem}
Given $0<\beta<n$ and $q=n/(n-\beta)$, there is a constant $C=C(n,\beta)>0$ such that for every radial function $f\in W^{1,1}(\rn)$ we have that $M_{\beta}f$ is weakly differentiable and
$$
\|DM_{\beta}f\|_{L^{q}(\rn)}\leq C\|Df\|_{L^{1}(\rn)}.
$$
\end{theorem}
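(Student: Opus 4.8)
The plan is to reduce to the case $0<\beta<1$ (the remaining cases follow from the Kinnunen--Saksman pointwise bound and Sobolev embedding, as explained in the introduction), and then to exploit the radial structure to reduce the $n$-dimensional problem to a tractable one-dimensional-flavored estimate. Writing $f(x)=g(|x|)$, the key observation should be that $M_\beta f$ is again radial, say $M_\beta f(x)=G(|x|)$, and that the gradient bound $\|DM_\beta f\|_{L^q(\rn)}\le C\|Df\|_{L^1(\rn)}$ translates, via polar coordinates, into a weighted one-dimensional inequality: $\big(\int_0^\infty |G'(r)|^q r^{n-1}\,dr\big)^{1/q}\le C\int_0^\infty |g'(r)|\, r^{n-1}\,dr$. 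So the first concrete step is to record the radial reductions carefully, including showing $M_\beta f$ is weakly differentiable (approximating $f$ by smooth radial functions, or using the known $W^{1,p}$ theory and a limiting argument).

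The heart of the argument is to understand, for a given point $x$ with $|x|=r$, the balls $B(z,s)$ that nearly realize the supremum defining $M_\beta f(x)$. The plan is to prove a structural lemma: for radial $f$, near-optimal balls can be taken to be centered on the ray through $x$ (by a symmetrization/averaging argument over rotations, since averaging $|f|$ over a sphere does not decrease the relevant quantity for a radial $f$), and moreover the value $M_\beta f(x)$ depends on $r=|x|$ in a way one can differentiate. One then wants to show that at a.e.\ $r$ where $G$ is differentiable, $|G'(r)|$ is controlled by the average of $|g'|$ over an interval associated to the optimal ball, with the correct powers of $r$ coming from the $r^\beta/|B|$ normalization. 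Concretely, if the optimal ball for $x$ has radius $s=s(r)$ and contributes value $G(r)= s^\beta \fint_{B} |f|$, differentiating in $r$ and using the Euler--Lagrange/optimality conditions (the derivative in the ball parameters vanishes) should yield $|G'(r)| \lesssim s^{\beta-1}\,\fint_{B}|Df|$ or a similar quantity involving $s^\beta$ times a normalized integral of $|Df|$ over the optimal ball.

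Given such a pointwise derivative bound, the final step is to integrate: one needs $\int_0^\infty |G'(r)|^q r^{n-1}\,dr \lesssim \big(\int_0^\infty |g'(r)| r^{n-1}\,dr\big)^q$. Here the scaling is exactly right because $q=n/(n-\beta)$ is the critical exponent making $\|M_\beta h\|_{L^q}\lesssim \|h\|_{L^1}$ scale-invariantly; so the strategy is to apply the $L^1\to L^{q,\infty}$ (or $L^1\to L^q$ after summing over dyadic scales of the ball radius $s$) boundedness of an auxiliary fractional-type maximal operator applied to $|Df|$, exploiting that $|Df|=|g'(|x|)|$ is itself radial and in $L^1$. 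The dyadic decomposition over the size of the optimal radius $s(r)$, combined with a covering/disjointness argument for the corresponding $r$-intervals, should convert the pointwise bound into the claimed norm inequality.

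\textbf{Main obstacle.} The difficult point will be establishing the pointwise derivative estimate for $G$ at the level of the optimal (non-centered) balls: one must control how the optimizing ball $B(z(r),s(r))$ moves as $r$ varies, rule out pathological jumps, and show the optimality conditions genuinely give a gain of one derivative $s^{\beta-1}$ rather than $s^{\beta-2}\cdot s = s^{\beta-1}$ only in a formal sense. Handling the non-centered supremum — where the center $z$ is a free parameter and the optimal ball may be tangent to the origin or have $x$ on its boundary — is exactly where one-dimensional arguments were previously used, and making this step essentially dimension-free (as the abstract promises) is the crux.
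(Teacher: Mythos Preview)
Your plan has the right scaffolding---radial symmetry of $M_\beta f$, best balls centered on the ray through $x$, and optimality/Euler--Lagrange conditions---but the decisive step is missing, and your ``Main obstacle'' paragraph misidentifies where the real difficulty lies. The pointwise bounds you propose amount to $|DM_\beta f(x)|\le r_x^\beta\intav_{B_x}|Df|\le M_\beta(|Df|)(x)$, and since $M_\beta:L^1\to L^{q,\infty}$ only, this yields a weak-type estimate. Your hope of upgrading to strong $L^q$ via ``dyadic decomposition over the optimal radius plus a covering/disjointness argument'' is precisely the unresolved issue; you give no mechanism producing the needed disjointness, and if one existed routinely the full nonradial conjecture would follow as well. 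The obstacle is thus not in obtaining a pointwise derivative formula (that part is easy: $|DM_\beta f(x)|=r_x^\beta\big|\intav_{B_x}D|f|\big|$ exactly, by Lemma~\ref{basic}), but in integrating it.

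The paper sidesteps the weak-vs-strong obstruction by an algebraic reduction you do not mention: using $q\beta=n(q-1)$,
\[
\int_E |DM_\beta f|^q\,dx=\int_E \frac{r_x^{q\beta}}{(\omega_n r_x^n)^{q-1}}\Big|\int_{B_x}D|f|\Big|^{q-1}\Big|\intav_{B_x}D|f|\Big|\,dx\le C\|Df\|_1^{\,q-1}\int_E\Big|\intav_{B_x}D|f|\Big|\,dx,
\]
so everything reduces to the $L^1$ bound $\int_E\big|\intav_{B_x}D|f|\big|\,dx\le C\|Df\|_1$. This is then split into three regimes according to the position of the best ball $B_x=B(c_xx,|c_x-1||x|)$: large $c_x$ (trivial Fubini), small $c_x$ (use the identity $\intav_{B}D|f|(y)\cdot(y-x)\,dy=-\beta\intav_B|f|$ from Corollary~\ref{coro good eq} to gain a factor $|y|/|x|$), and $c_x\approx 1$ (small balls). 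The last case is the heart of the matter and uses the genuinely new Lemma~\ref{lemma2}: $\big|\intav_{B_x}D|f|\big|\le C\intav_{2B_x}|Df|\,\chi_{\{\frac12|f|_{B_x}\le|f|\le2|f|_{B_x}\}}$. The level-set restriction, combined with Proposition~\ref{prop1}, forces any two balls $2B_x$, $2B_{x'}$ contributing at a common point $y$ to have comparable radii---this is exactly the overlap control your plan was hoping for but did not supply.
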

The proof adapts some basic ideas from \cite{L2}, like in Lemma \ref{basic}. However, as we will see (and as one can see in \cite{CaMa} as well), some new difficulties arise
with respect to the case of the classical maximal operator. 
The key element to overcome these problems is Lemma \ref{lemma2}. We believe that the modification of this result may play a crucial role in the solution of the problem in its full generality.
In addition, we point out that the presented argument also gives a new proof for the case $n=1$\footnote{With slight modifications in the proof in the case $f$ is not symmetric with respect to the origin}, in other words our argument also gives a new proof for Theorem 1 in \cite{CaMa}.


\section{Preliminaries}\label{sec2}
Let us introduce some notation. The boundary of the $n$-dimensional unit ball is denoted by $S^{n-1}$. The $s$-dimensional Hausdorff measure is denoted by 
$\mathcal{H}^s$. The volume of the $n$-dimensional unit ball is denoted by $\omega_n$ and the $\mathcal{H}^{n-1}$-measure of $S^{n-1}$ by $\sigma_n$. The integral average of $f\in L^{1}_{loc}(\rn)$ over a measurable set $A\subset\rn$ is sometimes denoted by $f_A$.  The weak derivative of 
$f$ (if exists) is denoted by $Df$. 
If $v\in S^{n-1}$, then 
\begin{equation*}
D_vf(x):=\lim_{h\to 0}\frac{1}{h}(f(x+hv)-f(x))\,,
\end{equation*}
in the case the limit exists. For $f\in W^{1,1}(\rn)$, $0\leq \beta<n$, let us define
 \begin{align*}
     & \mathcal{B}^{\beta}_{x}(f)\\
=\,&\mathcal{B}_x:=\bigg{\{}B(z,r):x\in\bar{B}(z,r)\,,\,M_{\beta}f(x)=r^{\beta}\intav_{B(z,r)}|f(y)|\,dy\,\bigg{\}}.
 \end{align*}
We use to call $\mathcal{B}_x$ as the collection of the best balls at $x$. It is easy to see that $\mathcal{B}_x$ is non-empty set for every $x\in \RR^{n}$ (since $f\in L^{1}(\RR^{n})$) and also it is compact in the sense that if $B(z_k,r_k)\in\mathcal{B}_x$ and $z_k\to z\in\rn$ and $r_k\to r\in(0,\infty)$ as $k\to\infty$, then $B(z,r)\in\mathcal{B}_x$.

\begin{proposition}\label{afin map 1}
Given $f\in W^{1,1}(\rn)$, $B$ a ball, a family of affine mappings $L_{i}(y)=a_{i}y+b_{i}$, $a_{i}\in\mathbb{R}$, $b_{i}\in\rn$, $B_i:=L_i(B)$ and a sequence $\{h_{i}\}_{i\in\N}\subset\mathbb{R}$ such that $h_{i}\to0$ as $i\to\infty$, 
$$
\lim_{i\to\infty}\frac{L_{i}(y)-y}{h_{i}}=g(y)
\,\,\text{ and }\,\,
\lim_{i\to\infty}\frac{a_{i}^{\beta}-1}{h_{i}}=\gamma,
$$
where $\gamma\in\re, g:\rn\to\re\,$,  it holds that
 \begin{align*}
&\lim_{i\to\infty}\frac{1}{h_{i}}\left(r^{\beta}_{i}\intav_{B_{i}}|f(y)|dy-r^{\beta}\intav_{B}|f(y)|dy\right)\\
=\,&r^{\beta}\intav_{B}D|f|(y)\cdot g(y)dy+\gamma r^{\beta}\intav_{B}|f(y)|\,dy\,,
\end{align*}
where $r$ denotes the radius of $B$ and $r_{i}$ the radius of $B_{i}$ for every $i$.
\end{proposition}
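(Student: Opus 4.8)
The plan is to reduce everything to a single change-of-variables computation followed by a first-order Taylor expansion, controlled by dominated convergence. First I would observe that $L_i$ is an affine dilation, so $r_i = a_i r$ and hence $r_i^\beta = a_i^\beta r^\beta$. Writing $a_i^\beta = 1 + \gamma h_i + o(h_i)$ by hypothesis, the factor $r_i^\beta$ splits as $r^\beta(1 + \gamma h_i + o(h_i))$, which will produce the second term $\gamma r^\beta \intav_B |f|$ in the limit. So the real content is to show
\begin{equation*}
\lim_{i\to\infty}\frac{1}{h_i}\left(\intav_{B_i}|f(y)|\,dy - \intav_{B}|f(y)|\,dy\right) = \intav_B D|f|(y)\cdot g(y)\,dy,
\end{equation*}
and then combine the two expansions, noting that the cross term $\gamma h_i \cdot \frac{1}{h_i}(\intav_{B_i}|f| - \intav_B |f|)$ vanishes in the limit since the second factor stays bounded.

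Next I would perform the substitution $y = L_i(w)$ in the integral over $B_i = L_i(B)$; since $L_i$ is a similarity with scaling factor $a_i$, the Jacobian factors cancel against the normalization $|B_i| = a_i^n |B|$, giving $\intav_{B_i}|f(y)|\,dy = \intav_B |f(L_i(w))|\,dw$. Therefore the difference quotient becomes
\begin{equation*}
\frac{1}{h_i}\intav_B \big(|f(L_i(w))| - |f(w)|\big)\,dw = \intav_B \frac{|f(L_i(w))| - |f(w)|}{h_i}\,dw.
\end{equation*}
Now I would write $L_i(w) - w = h_i g(w) + o(h_i)$ pointwise (uniformly on the bounded set $B$, since $L_i$ is affine and the coefficients converge), and use that $f\in W^{1,1}(\rn)$ together with the chain rule for $|f|$ — namely $D|f| = \sgn(f)\,Df$ a.e. — to identify the pointwise limit of the integrand as $D|f|(w)\cdot g(w)$ for a.e.\ $w$.

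The main obstacle is justifying the passage of the limit inside the integral, i.e.\ producing an $L^1(B)$-dominating function for the difference quotients $\frac{|f(L_i(w))| - |f(w)|}{h_i}$. The clean way is to first establish the identity for $f\in C^\infty_c(\rn)$, where the difference quotient is uniformly bounded on $B$ by a Taylor estimate and convergence is immediate, and then pass to general $f\in W^{1,1}$ by density: given $f$, approximate by $f_k \to f$ in $W^{1,1}$, and control the error using the fact that the map $f\mapsto \frac{1}{h_i}(r_i^\beta\intav_{B_i}|f| - r^\beta\intav_B|f|)$ is, for each fixed $i$, bounded on $W^{1,1}$ with a bound uniform in $i$ — this uniform bound follows from the Lipschitz-type estimate $\big|\,|f(L_i(w))| - |f(w)|\,\big| \le \int_0^1 |Df(w + t(L_i(w)-w))|\,|L_i(w)-w|\,dt$ integrated over $B$, combined with $|L_i(w) - w| \le C h_i$ and a change of variables bounding $\intav_B \int_0^1 |Df(w + t(L_i(w)-w))|\,dt\,dw$ by $C\|Df\|_{L^1(\rn)}$ uniformly in $i$ (here one uses that the affine maps $w\mapsto w + t(L_i(w)-w)$ have Jacobians bounded away from $0$ for $i$ large). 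A standard $3\varepsilon$ argument then transfers the identity from the dense class to all of $W^{1,1}(\rn)$, and reinstating the $r_i^\beta$ factor as above completes the proof.
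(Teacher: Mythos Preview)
Your proposal is correct and follows essentially the same approach as the paper: write $r_i^\beta=a_i^\beta r^\beta$, change variables $y=L_i(w)$ so that $\intav_{B_i}|f|=\intav_B|f\circ L_i|$, and split the difference quotient into a piece converging to $\intav_B D|f|\cdot g$ and a piece converging to $\gamma\intav_B|f|$. The paper's proof performs exactly this computation in four lines (adding and subtracting $a_i^\beta|f(y)|$ rather than expanding $a_i^\beta$ first, which is an equivalent rearrangement) and simply asserts the final convergence, whereas you additionally supply the justification via density in $C^\infty_c$ and a uniform $W^{1,1}$ bound on the difference quotients --- a detail the paper omits.
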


\begin{proof}[Proof of Proposition \ref{afin map 1}]
\begin{eqnarray*}
&&\frac{1}{h_{i}}\left(r^{\beta}_{i}\intav_{B_{i}}|f(y)|dy-r^{\beta}\intav_{B}|f(y)|dy\right)\\
&&=\frac{1}{h_{i}}\left(a^{\beta}_{i}r^{\beta}\intav_{L_{i}(B)}|f(y)|dy-r^{\beta}\intav_{B}|f(y)|dy \right)\\
&&=r^{\beta}\left(\intav_{B}\frac{a^{\beta}_{i}|f(y+(L_{i}(y)-y))|-|f(y)|}{h_{i}}\right)\\
&&=r^{\beta}\left(\intav_{B}\frac{a^{\beta}_{i}|f(y+(L_{i}(y)-y))|-a^{\beta}_{i}|f(y)|}{h_{i}}dy\right.\\
&&\ \ \ \left.+\intav_{B}\frac{|f(y)|(a^{\beta}_{i}-1)}{h_{i}}dy\right)\\
&&\to r^{\beta}\intav_{B}D|f|(y)\cdot g(y)dy+\gamma r^{\beta}\intav_{B}|f(y)|dy\\
&& \text{as}\ \ i\to\infty.
\end{eqnarray*}
\end{proof}

\begin{lemma}\label{afin map 2}
Let $f\in W^{1,1}(\rn)$, $x\in\rn$, $B\in \mathcal{B}_{x}$, $\delta>0$, and let $L_{h}(y)=a_{h}y+b_{h}$, $h\in[-\delta,\delta]$, be affine mappings such that $x\in L_{h}(\overline B)$ 
and
$$
\lim_{h\to0}\frac{L_{h}(y)-y}{h}=g(y)
\,\,\text{ and }\,\,
\lim_{h\to 0}\frac{a_{h}^{\beta}-1}{h}=\gamma.
$$
Then
\begin{equation}\label{eq99}
0=(rad(B))^{\beta}\intav_{B}D|f|(y)\cdot g(y)dy+\gamma M_{\beta}f(x).
\end{equation}
\end{lemma}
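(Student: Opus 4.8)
The plan is to interpret the right-hand side of \eqref{eq99} as the derivative at $h=0$ of the function
\[
\phi(h):=r_{h}^{\beta}\intav_{B_{h}}|f(y)|\,dy,\qquad B_{h}:=L_{h}(B),\quad r_{h}:=rad(B_{h}),
\]
and then to exploit that $\phi$ has a local maximum at $h=0$, which forces $\phi'(0)=0$.

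First I would record two elementary facts. Since $B\in\mathcal{B}_{x}$, by definition $M_{\beta}f(x)=r^{\beta}\intav_{B}|f(y)|\,dy$ with $r:=rad(B)$; in particular $\phi(0)=M_{\beta}f(x)$. Second, for \emph{any} ball $B(z',r')$ with $x\in\overline{B}(z',r')$ one has $r'^{\,\beta}\intav_{B(z',r')}|f|\le M_{\beta}f(x)$: indeed, for each $\eps>0$ the open ball $B(z',(1+\eps)r')$ contains $x$ in its interior, hence is admissible in the supremum defining $M_{\beta}f(x)$, and letting $\eps\to 0^{+}$ the corresponding fractional averages converge to $r'^{\,\beta}\intav_{B(z',r')}|f|$ by dominated convergence (here we use $f\in L^{1}(\rn)$). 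Applying this with the ball $B_{h}=L_{h}(B)$, whose closure is $L_{h}(\overline{B})\ni x$ by hypothesis, gives
\[
\phi(h)\le M_{\beta}f(x)=\phi(0)\qquad\text{for every }h\in[-\delta,\delta].
\]

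Next I would differentiate $\phi$ at $0$ via Proposition \ref{afin map 1}. Fix any sequence $h_{i}\to 0$ and set $L_{i}:=L_{h_{i}}$, $a_{i}:=a_{h_{i}}$, $b_{i}:=b_{h_{i}}$, $B_{i}:=L_{i}(B)$. The hypotheses of the lemma say exactly that $\lim_{i}\big(L_{i}(y)-y\big)/h_{i}=g(y)$ and $\lim_{i}(a_{i}^{\beta}-1)/h_{i}=\gamma$, so Proposition \ref{afin map 1} yields
\[
\lim_{i\to\infty}\frac{\phi(h_{i})-\phi(0)}{h_{i}}
= r^{\beta}\intav_{B}D|f|(y)\cdot g(y)\,dy+\gamma\, r^{\beta}\intav_{B}|f(y)|\,dy .
\]
As the sequence $h_{i}\to 0$ was arbitrary, $\phi$ is differentiable at $0$, and using $r^{\beta}\intav_{B}|f|=M_{\beta}f(x)$ its derivative is
\[
\phi'(0)= (rad(B))^{\beta}\intav_{B}D|f|(y)\cdot g(y)\,dy+\gamma\,M_{\beta}f(x),
\]
which is precisely the right-hand side of \eqref{eq99}. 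Finally, $h=0$ is an interior point of $[-\delta,\delta]$ at which $\phi$ attains its maximum and at which $\phi$ is differentiable, so I would conclude $\phi'(0)=0$ by the first-derivative test (the quotient $(\phi(h)-\phi(0))/h$ is $\le 0$ for $h>0$ and $\ge 0$ for $h<0$). Comparing with the displayed formula for $\phi'(0)$ gives \eqref{eq99}.

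The argument is genuinely short, and there is no serious obstacle: the only point requiring a word of care is the passage from ``$x$ in the open ball'' in the definition of $M_{\beta}$ to ``$x$ in the closed ball $\overline{B_{h}}$'', which is taken care of by the dilation-and-dominated-convergence step above. Everything else is a direct application of Proposition \ref{afin map 1} together with the one-variable first-derivative test; notably, the radial structure of $f$ plays no role in this lemma.
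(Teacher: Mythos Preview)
Your proof is correct and follows essentially the same route as the paper's: both identify the right-hand side of \eqref{eq99} with the limit of the incremental quotients $\frac{1}{h}\big(rad(L_h(B))^{\beta}|f|_{L_h(B)}-r^{\beta}|f|_{B}\big)$ via Proposition~\ref{afin map 1}, observe that these quotients have opposite signs for $h>0$ and $h<0$ because $B\in\mathcal{B}_x$ and $x\in L_h(\overline B)$, and conclude the limit is zero. Your version is slightly more explicit about the open-versus-closed ball issue in the definition of $M_\beta$, which the paper leaves implicit.
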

\begin{proof}[Proof of Lemma \ref{afin map 2}]
By the previous Proposition \ref{afin map 1} the right hand side of (\ref{eq99}) equals to 
\begin{equation}
\lim_{h\to 0}\frac{1}{h}\left(rad(L_h(B))^{\beta}\intav_{L_h(B)}|f(y)|dy-r^{\beta}\intav_{B}|f(y)|dy\right)\,=:\lim_{h\to 0}\frac{1}{h}s_h\,.
\end{equation}
Since $B\in\mathcal{B}_x$ and $x\in L_h(\bar{B})$ for all $h$, it follows that $s_h\leq 0$ for all $h$. Since $h$ can take positive and negative values, the existing limit must equal to zero.
\end{proof}

\begin{corollary}\label{coro good eq}
In particular, if $L_{h}(y)=y+h(y-x)$ we obtain that $g(y)=y-x$, $\gamma=\beta$, and $x=L_h(x)\in L_h(\bar{B})$. Therefore
\begin{equation}\label{nice identity}
(rad(B))^{\beta}\intav_{B}D|f|(y)\cdot x dy=(rad(B))^{\beta}\intav_{B}D|f|(y)\cdot y\,dy+\beta M_{\beta}f(x).
\end{equation}
\end{corollary}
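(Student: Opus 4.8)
The plan is to obtain this as an immediate specialization of Lemma~\ref{afin map 2}. Take the one-parameter family of affine maps $L_{h}(y):=y+h(y-x)=(1+h)y-hx$, so that $a_{h}=1+h$ and $b_{h}=-hx$. Choosing $\delta\in(0,1)$ we have $a_{h}>0$ for all $h\in[-\delta,\delta]$, so each $L_{h}$ is a legitimate affine map of the dilation type allowed in Lemma~\ref{afin map 2}; moreover $L_{h}(x)=x$, and since $B\in\mathcal{B}_{x}$ forces $x\in\overline B$, we get $x=L_{h}(x)\in L_{h}(\overline B)$ for every $h$, which is the containment hypothesis of the lemma.

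Next I would verify the two limit conditions. Since $L_{h}(y)-y=h(y-x)$, one has $\bigl(L_{h}(y)-y\bigr)/h=y-x$ for every $h\neq0$, hence $g(y)=y-x$. For the scaling factor, $\bigl(a_{h}^{\beta}-1\bigr)/h=\bigl((1+h)^{\beta}-1\bigr)/h\to\beta$ as $h\to0$, which is just the derivative of $t\mapsto t^{\beta}$ at $t=1$, so $\gamma=\beta$.

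Finally, substituting $g(y)=y-x$ and $\gamma=\beta$ into \eqref{eq99} gives $0=(rad(B))^{\beta}\intav_{B}D|f|(y)\cdot(y-x)\,dy+\beta M_{\beta}f(x)$; expanding $D|f|(y)\cdot(y-x)=D|f|(y)\cdot y-D|f|(y)\cdot x$, which is legitimate because $x$ is a fixed vector that factors out of the average, and rearranging yields \eqref{nice identity}. There is no genuine obstacle in this argument: the only points requiring care are keeping $\delta$ small enough that each $L_{h}$ stays in the admissible class of Lemma~\ref{afin map 2}, and noting that $x$ enters here as a constant vector rather than as the variable of integration.
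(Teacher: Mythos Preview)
Your argument is correct and matches the paper's approach exactly: the corollary is stated there as an immediate specialization of Lemma~\ref{afin map 2} with the choice $L_{h}(y)=y+h(y-x)$, and you have simply written out the verification of $g(y)=y-x$, $\gamma=\beta$, and $x\in L_{h}(\overline B)$ in full detail.
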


The following lemma is a counterpart of Lemma 2.2 in \cite{L2}. It was proved by Carneiro and Madrid in \cite[Theorem 1]{CaMa} that if $g\in W^{1,1}(\RR)$ thus $M_{\beta}{g}$ is absolutely continuous, therefore if $f\in W^{1,1}(\RR^{n})$ is a radial functions we can apply the next lemma.
\begin{lemma}\label{basic}
Suppose that $f\in W^{1,1}(\rn)$ and $M_{\beta}f$ is differentiable at $x$. Then
\begin{itemize}
\item[(1)]
For all $v\in S^{n-1}$ and $B\in\mathcal{B}_x\,$, it holds that 
\begin{equation*}
D_{v}M_{\beta}f(x)=(rad(B))^{\beta}\intav_{B}D_{v}|f|(y)\,dy\,.
\end{equation*}
\item[(2)]
If $x\in B$ for some $B\in \mathcal{B}_x$, then $DM_{\beta}f(x)=0\,.$
\item[(3)]
If $x\in \partial B$, $B=B(z,r)\in \mathcal{B}_x$ and $DM_{\beta}f(x)\not=0$, then
\begin{equation*}
\frac{DM_{\beta}f(x)}{|DM_{\beta}f(x)|}=\frac{z-x}{|z-x|}\,.
\end{equation*}
\item[(4)] If $B\in\mathcal{B}_x$, then 
\begin{equation}\label{char}
\int_{B}|f|(y)dy\,=\,-\frac{1}{\beta}\int_{B}D|f|(y)\cdot (y-x)\,dy\,.
\end{equation}
\end{itemize}
\end{lemma}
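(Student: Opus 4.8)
The plan is to prove the four assertions in sequence, since (1) is the engine and (2)--(4) are consequences of it together with the first-variation identity already established in Lemma \ref{afin map 2} and Corollary \ref{coro good eq}.

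For part (1), fix $v \in S^{n-1}$ and $B = B(z,r) \in \mathcal{B}_x$. I would apply Proposition \ref{afin map 1} with the pure translations $L_h(y) = y + hv$, so that $a_h = 1$ (hence $\gamma = 0$), $g(y) \equiv v$, and $B_h = B(z + hv, r)$. The key observation is that $x + hv \in \overline{B(z+hv, r)}$ for every $h$, so $B_h$ is an admissible ball at the point $x + hv$; consequently $M_\beta f(x+hv) \ge r^\beta \intav_{B_h} |f|$, while $M_\beta f(x) = r^\beta \intav_B |f|$ because $B$ is a best ball at $x$. This gives the one-sided bound
\begin{equation*}
\frac{M_\beta f(x+hv) - M_\beta f(x)}{h} \ge \frac{1}{h}\left( r^\beta \intav_{B_h} |f| - r^\beta \intav_B |f| \right) \quad (h > 0),
\end{equation*}
with the reversed inequality for $h < 0$. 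Since $M_\beta f$ is differentiable at $x$ by hypothesis, letting $h \to 0^+$ and $h \to 0^-$ and invoking Proposition \ref{afin map 1} (whose right-hand side here is $r^\beta \intav_B D|f|(y)\cdot v\, dy$, the $\gamma$-term vanishing) squeezes $D_v M_\beta f(x)$ to exactly $(\mathrm{rad}(B))^\beta \intav_B D_v |f|(y)\, dy$.

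For part (2): if $x$ lies in the open ball $B \in \mathcal{B}_x$, then for every direction $v$ and all sufficiently small $h$ the translated ball $B_h = B + hv$ still contains $x$, and in fact is admissible at $x$ itself, so $M_\beta f(x) \ge r^\beta \intav_{B+hv} |f|$ for $h$ near $0$ of either sign. Thus $h \mapsto r^\beta \intav_{B+hv}|f|$ has an interior maximum at $h=0$, forcing its derivative — which by Proposition \ref{afin map 1} equals $r^\beta \intav_B D_v|f|$ — to vanish; combined with part (1) this gives $D_v M_\beta f(x) = 0$ for every $v$, i.e.\ $DM_\beta f(x) = 0$. For part (3), suppose $x \in \partial B$ with $B = B(z,r)$ and $DM_\beta f(x) \neq 0$. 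Translating $B$ in direction $v$ keeps $x$ in $\overline{B+hv}$ precisely when moving toward the interior, i.e.\ for $h$ of the sign making $z + hv$ move so that $|x - (z+hv)| \le r$; for the directions $v$ with $v \cdot (z-x) > 0$ this holds for small $h > 0$ and we get the one-sided inequality $D_v M_\beta f(x) \ge (\text{derivative of the average})$, while for $v \cdot (z-x) < 0$ the admissible perturbations have $h<0$ and the inequality flips. Playing the two sides against part (1) shows $D_v M_\beta f(x) \ge 0$ whenever $v$ points into the half-space $\{v : v\cdot(z-x) \ge 0\}$ and $\le 0$ on the complementary half-space; a linear functional that is nonnegative on a closed half-space and nonpositive on the opposite one must be a nonnegative multiple of the linear functional defining that half-space, so $DM_\beta f(x)$ is a positive multiple of $z - x$, which is the claim.

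Part (4) is essentially a restatement of Corollary \ref{coro good eq}: the identity \eqref{nice identity} reads $(\mathrm{rad}(B))^\beta \intav_B D|f|(y)\cdot(x - y)\,dy = \beta M_\beta f(x) = \beta (\mathrm{rad}(B))^\beta \intav_B |f|(y)\,dy$, and dividing by $(\mathrm{rad}(B))^\beta$ and clearing the averaging constant $|B|$ yields $\int_B D|f|(y)\cdot(y-x)\,dy = -\beta \int_B |f|(y)\,dy$, which rearranges to \eqref{char}. I expect the main obstacle to be the bookkeeping in part (3): one must be careful about which translation directions actually keep $x$ inside the closed translated ball (this is a first-order condition on $v \cdot (z-x)$, and the boundary case $v \perp (z-x)$ needs a separate argument, e.g.\ via a second-order expansion or by approximating from nearby directions), and about correctly matching the signs of the admissible $h$ with the one-sided difference quotients so that the two half-space inequalities come out with the right orientation. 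Everything else reduces to applying Proposition \ref{afin map 1} to explicit affine families and using the best-ball inequality $M_\beta f(\cdot) \ge r^\beta \intav_{B'}|f|$ together with the assumed differentiability of $M_\beta f$ at $x$.
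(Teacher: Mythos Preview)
Your arguments for parts (1) and (4) coincide with the paper's. For (2) the paper uses a one-line observation instead: if $x\in B\in\mathcal{B}_x$ then $B$ is admissible at every $y\in B$, so $M_\beta f(y)\ge M_\beta f(x)$ throughout $B$, making $x$ a local minimum and hence a critical point. Your translation argument is a correct alternative, just slightly less direct.

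For part (3) your route differs from the paper's and contains the sign slip you anticipated. You assert that $x\in\overline{B+hv}$ for small $h>0$ when $v\cdot(z-x)>0$, but since $|x-(z+hv)|^2=r^2+2h\,v\cdot(z-x)+h^2$, this actually holds for small $h\le 0$. The phrasing ``$D_v M_\beta f(x)\ge(\text{derivative of the average})$'' is also off, since by (1) these are equal; what the admissibility of $B+hv$ at $x$ actually gives is a one-sided maximum of the average at $h=0$, hence a sign on its derivative, hence (via (1)) a sign on $D_vM_\beta f(x)$. A cleaner way to run your idea is to move the point rather than the ball: when $v\cdot(z-x)>0$ one has $x+hv\in B$ for small $h>0$, so $M_\beta f(x+hv)\ge r^\beta|f|_B=M_\beta f(x)$ and thus $D_vM_\beta f(x)\ge 0$; the opposite inequality follows for $v\cdot(z-x)<0$. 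With this correction your half-space argument is valid, and the continuity of the linear map $v\mapsto DM_\beta f(x)\cdot v$ automatically forces it to vanish on $\{v:v\cdot(z-x)=0\}$, so the separate perpendicular argument you worried about is in fact unnecessary.

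The paper takes the complementary route: it works \emph{only} with perpendicular directions $v\perp(z-x)$, enlarging the ball slightly to $B(z,|z-(x+hv)|)$ so that $x+hv$ lies on its boundary, and derives the second-order bound $M_\beta f(x+hv)\ge(1-h^2/r^2)^{n-\beta}M_\beta f(x)$; this gives $D_vM_\beta f(x)\ge 0$ (hence $=0$) for all perpendicular $v$, so the gradient is parallel to $z-x$, and monotonicity of $M_\beta f$ along the segment from $x$ to $2z-x$ fixes the sign. Your approach avoids the second-order estimate at the cost of the sign bookkeeping; the paper's avoids the half-space linear-algebra step at the cost of the explicit ball-enlargement computation.
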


\begin{proof}
\begin{itemize}
\item[(1)] Let $B=B(z,r)\in\mathcal{B}_x\,$ and  $B_{h}:=B(z+hv,r)$. Then it holds for every $v\in S^{n-1}$ that 
\begin{eqnarray*}
\lim_{h\to 0} \frac{M_{\beta}f(x+hv)-M_{\beta}f(x)}{h}
&\geq& \lim_{h\to 0}\frac{r^{\beta}\intav_{B_{h}}|f(y)|dy-r^{\beta}\intav_{B}|f(y)|dy}{h}\\
&=&r^{\beta}\intav_{B}D_{v}|f|(y)dy\\
&=& \lim_{h\to 0}\frac{r^{\beta}\intav_{B}|f(y)|dy-r^{\beta}\intav_{B_{-h}}|f(y)|dy}{h}\\
&\geq&  \lim_{h\to 0} \frac{M_{\beta}f(x)-M_{\beta}f(x-hv)}{h}\,.
\end{eqnarray*}
\item[(2)] If $B\in \mathcal{B}_x$ and $x\in B$, then $M_{\beta}f(x)\leq M_{\beta}f(y)$ for every $y\in B$. 
\item[(3)] 

  Let  $B=B(z,r)\in\mathcal{B}_x$, $v\in S^{n-1}$ such that $v\cdot(z-x)=0$, and let us denote for all $h\in(0,\infty)$ that $x_h:=x+hv$, 
$r_h:= |z-x_h|$, and $B_h:=B(z,r_h)$. These definitions guarantee that $x_h\in \bar{B}_h\setminus B$ for all $h$, and $B\subset B_h$. Moreover, 
since $v\cdot(z-x)=0$, it is elementary fact that
\begin{equation*}
r_h=|z-x-hv|\leq |z-x|+\frac{h^2}{2r}\,.
\end{equation*}
 Therefore, $r/r_h\geq 1-(\frac{h}{r})^2\,$, and
\begin{align*}
M_{\beta}f(x_h)&\geq r_h^{\beta}\intav_{B_h}|f(z)|\,dz\,\geq\, \frac{r_h^{\beta}|B|}{r^{\beta}|B_h|}r^{\beta}\intav_{B}|f(z)|\,dz\\
=&\, \bigg(\frac{r}{r_h}\bigg)^{n-\beta}r^{\beta}\intav_{B}|f(z)|\,dz
\geq \,\bigg(1-\frac{h^2}{r^2}\bigg)^{n-\beta} M_{\beta}f(x)\,.
\end{align*}  
This implies  that $D_{v}M_{\beta}f(x)\geq 0\,$ for all $v\in S^{n-1}$ such that $v\cdot(z-x)=0\,$. Since we assumed that $M_{\beta}f$ is differentiable at $x$, it follows that 
\begin{equation*}
D_{v}M_{\beta}f(x)=0\, \text{ if }v\in S^{n-1}, v\cdot(z-x)=0\,.
\end{equation*}
 In particular, it follows that $DM_{\beta}f(x)$ is parallel to $z-x$ or $x-z$. The final claim follows easily 
by the fact that $M_{\beta}f(x+h(z-x))\geq M_{\beta}f(x)$ if $0\leq h\leq 2$.


\item[(4)] This is an immediate consequence of Corollary \ref{coro good eq}.
\end{itemize}

\end{proof}

\begin{proposition}\label{prop3}
 If $f\in W^{1,1}_{loc}(\rn)$, $z\in\rn$, $r>0$, then 
 \begin{equation}
  \intav_{B(z,r)}D|f|(y)\cdot(z-y)\,dy\,=\,n\bigg[\intav_{B(z,r)}|f|-\,\intav_{\partial B(z,r)}|f|\bigg]\,.
 \end{equation}
\end{proposition}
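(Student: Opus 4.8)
The plan is to prove the identity by a straightforward application of the divergence theorem (integration by parts), exploiting the fact that the vector field $y\mapsto z-y$ has constant divergence $-n$. First I would write the scaled statement: after translating so that $z=0$ and writing $g=|f|$ (which is again in $W^{1,1}_{loc}$), it suffices to show
\begin{equation*}
\int_{B(0,r)} Dg(y)\cdot(-y)\,dy \;=\; n\int_{B(0,r)} g(y)\,dy \;-\; n\,\frac{|B(0,r)|}{|\partial B(0,r)|}\int_{\partial B(0,r)} g\,d\mathcal{H}^{n-1},
\end{equation*}
since $\intav_{B(0,r)}(\cdot) = |B(0,r)|^{-1}\int_{B(0,r)}(\cdot)$ and $\intav_{\partial B(0,r)}(\cdot)=|\partial B(0,r)|^{-1}\int_{\partial B(0,r)}(\cdot)$, and $|B(0,r)|/|\partial B(0,r)| = r/n$, which is exactly the factor that will reconcile the two sides.

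Next I would apply the integration-by-parts formula component-wise: for the vector field $V(y) = g(y)\,y$ on $B(0,r)$, the divergence theorem gives $\int_{B(0,r)} \divt V\,dy = \int_{\partial B(0,r)} g(y)\,(y\cdot \nu)\,d\mathcal{H}^{n-1}$, where $\nu(y)=y/|y|$ is the outer unit normal. On $\partial B(0,r)$ we have $y\cdot\nu = |y| = r$, so the boundary term equals $r\int_{\partial B(0,r)} g\,d\mathcal{H}^{n-1}$. On the other hand $\divt V = Dg(y)\cdot y + g(y)\,\divt(y) = Dg(y)\cdot y + n\,g(y)$. Rearranging yields
\begin{equation*}
\int_{B(0,r)} Dg(y)\cdot y\,dy \;=\; r\int_{\partial B(0,r)} g\,d\mathcal{H}^{n-1} \;-\; n\int_{B(0,r)} g\,dy,
\end{equation*}
and multiplying by $-1$ and dividing appropriately by $|B(0,r)|$ gives the claimed identity once one checks $r\,|\partial B(0,r)|/|B(0,r)| = n$, i.e. $r\cdot\sigma_n r^{n-1}/(\omega_n r^n)=n$ using $\sigma_n = n\omega_n$.

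The only genuine point requiring care is the justification of the divergence theorem for $g=|f|$ with $f\in W^{1,1}_{loc}$ rather than a smooth function. Here I would argue by approximation: take mollifications $g_\varepsilon = g * \rho_\varepsilon \to g$ in $W^{1,1}(B(0,r'))$ for any $r'>r$, apply the classical smooth divergence theorem to each $g_\varepsilon$, and pass to the limit — the interior integrals converge by $L^1$ convergence of $g_\varepsilon$ and $Dg_\varepsilon$, and the boundary integrals converge for a.e. radius $r$ by the trace/Fubini argument (for a.e. $r$, $g_\varepsilon|_{\partial B(0,r)} \to g|_{\partial B(0,r)}$ in $L^1(\partial B(0,r))$). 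Since both sides of the desired identity are continuous in $r$ when $g$ is (say) locally integrable with the right averaged boundary values, and the identity holds for a.e. $r$, it holds for all $r>0$; alternatively one notes that $\intav_{\partial B(z,r)}|f|$ is precisely the spherical average, which is well-defined and the argument goes through directly. This approximation step is routine but is the part that must be written with a minimum of care; everything else is the constant-divergence computation above.
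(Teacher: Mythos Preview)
Your proposal is correct and follows essentially the same approach as the paper: both apply the divergence theorem/integration by parts to the vector field $y\mapsto |f|(y)(y-z)$ on $B(z,r)$, using that $\divt(y-z)=n$ and that the outer normal on $\partial B(z,r)$ is $(y-z)/r$. The only cosmetic differences are that the paper carries out the computation component by component (and does not translate $z$ to the origin), and that instead of your mollification argument it simply invokes the integration-by-parts formula for Sobolev functions (valid here with $u=|f|\in W^{1,1}$ and $v=z_i-y_i\in C^\infty$).
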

\begin{proof}
In the case of radial functions the previous proposition follows from the one dimensional case (that is enough in order to get Theorem \ref{Main Theorem}). In general, the proof of this proposition is based in the following fact, which is a consequence of Gauss Divergence Theorem.

{\remark[Integration by parts]{
 Given $\Omega\subset \rn$ a bounded open set with $C^{1}$ boundary and $\nu$ denotes the outward unit normal to $\partial \Omega$ if $u\in W^{1,p}(\Omega)$ and $v\in W^{1,q}(\Omega)$ for exponents p,q with
$$
\frac{1}{p}+\frac{1}{q}\leq 1+\frac{1}{n}
$$
thus the following identity holds
$$
\int_{\Omega}u\frac{\partial v}{\partial y_{i}}=\int_{\partial\Omega}uv\nu_{i}-\int_{\Omega}v\frac{\partial u}{\partial y_{i}}
$$
where $\nu_{i}$ is the $i-$component of the vector $\nu$.
}}

Using this we get
\begin{eqnarray*}
&&\int_{B(z,r)}D|f|(y)\cdot(z-y)dy\\
&&=\sum_{i=1}^{n}\int_{B(z,r)}\frac{\partial|f|}{\partial y_{i}}(y)(z_{i}-y_{i})dy\\
&&=\sum_{i=1}^{n}\left(\int_{\partial B(z,r)}|f(y)|(z_{i}-y_{i})\cdot\frac{(y_{i}-z_{i})}{|y-z|}dy+\int_{B(z,r)}|f(y)|dy\right)\\
&&=-\int_{\partial B(z,r)}|f(y)|\sum_{i=1}^{n}\frac{|y_{i}-z_{i}|^{2}}{|y-z|}dy+\sum_{i=1}^{n}\int_{B(z,r)}|f(y)|dy\\
&&=n\int_{B(z,r)}|f(y)|dy-\int_{\partial B(z,r)}|f(y)||y-z|dy\\
&&=n\left[\int_{B(z,r)}|f(y)|dy-\frac{r}{n}\int_{\partial B(z,r)}|f(y)|dy\right]\\
&&=n\left[\int_{B(z,r)}|f(y)|dy-\frac{r^{n}w_{n}}{r^{n-1}\sigma_{n}}\int_{\partial B(z,r)}|f(y)|dy\right]\\
&&=n\left[\int_{B(z,r)}|f(y)|dy-|B(z,r)|\intav_{\partial B(z,r)}|f(y)|dy\right].
\end{eqnarray*}

\noindent
By dividing both sides of the last equality by $|B(z,r)|$ we arrived in the desired identity.

\end{proof}

By using Proposition \ref{prop3} we yet  state one more formula related to the derivative of the  fractional maximal operator.
\begin{lemma}\label{lemma1}
Suppose that $f\in W^{1,1}_{loc}(\rn)$, $0<\beta<n$, $B\in \mathcal{B}^{\beta}_x$ for some $x\in\rn$, and $r:=rad(B)\,$. Then
\begin{equation}
\bigg|\intav_{B}D|f|(y)\,dy\,\bigg|\,=\,\frac{n}{r}\bigg[\big(1-\beta/n)\intav_{B}|f(y)|\,dy\,-\intav_{\partial B}|f(y)|dy\,\bigg]\,.
\end{equation}
\end{lemma}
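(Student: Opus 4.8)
The statement relates the length of the average gradient $\left|\intav_B D|f|\,dy\right|$ to a balance between the average of $|f|$ over $B$ and over $\partial B$. The natural strategy is to combine the two identities that have already been established: the characterization from Lemma~\ref{basic}(4) (equivalently Corollary~\ref{coro good eq}), which says that for $B=B(z,r)\in\mathcal{B}_x$ we have $\int_B|f|\,dy=-\frac1\beta\int_B D|f|(y)\cdot(y-x)\,dy$, and the divergence-theorem identity of Proposition~\ref{prop3}, which expresses $\intav_B D|f|(y)\cdot(z-y)\,dy$ in terms of $\intav_B|f|-\intav_{\partial B}|f|$. The point is that Lemma~\ref{basic}(4) involves the point $x$ (which may lie on $\partial B$), whereas Proposition~\ref{prop3} involves the center $z$; subtracting the two should eliminate the hard-to-control linear term and leave something proportional to $\intav_B D|f|\cdot(z-x)$, i.e. to $|z-x|$ times the directional derivative of $M_\beta f$ in the radial direction.

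\textbf{Key steps.} First I would write $y-x=(y-z)+(z-x)$ inside the integral of Lemma~\ref{basic}(4), so that
$$
\intav_B|f|\,dy=-\frac1\beta\intav_B D|f|(y)\cdot(y-z)\,dy-\frac1\beta\Big(\intav_B D|f|(y)\,dy\Big)\cdot(z-x).
$$
Next I would substitute Proposition~\ref{prop3}, which gives $\intav_B D|f|(y)\cdot(y-z)\,dy=-n\big[\intav_B|f|-\intav_{\partial B}|f|\big]$; plugging this in and rearranging yields
$$
\Big(\intav_B D|f|(y)\,dy\Big)\cdot(z-x)=\frac{\beta}{1}\cdot\frac{n}{\beta}\Big[(1-\beta/n)\intav_B|f|-\intav_{\partial B}|f|\Big],
$$
up to bookkeeping with the signs and the factor $n/\beta$. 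Finally, since $B\in\mathcal{B}_x$ with $x\in\overline B$, either $x$ is interior to $B$ — in which case Lemma~\ref{basic}(2) forces $DM_\beta f(x)=0$ and, via Lemma~\ref{basic}(1), $\intav_B D|f|\,dy$ to be orthogonal to every direction, hence zero, while simultaneously $x$ interior should be shown to force the bracket on the right to vanish as well (a limiting/monotonicity argument, or simply noting this case can be excluded when $DM_\beta f(x)\neq 0$) — or $x\in\partial B$, so that $|z-x|=r$ and, by Lemma~\ref{basic}(1) and (3), the vector $\intav_B D|f|\,dy$ is itself parallel to $z-x$, whence $\big|\intav_B D|f|\,dy\big|\cdot r$ equals the left side of the displayed identity. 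Dividing by $r$ gives the claim.

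\textbf{Main obstacle.} The routine part is the algebra of combining the two identities; the delicate point is justifying that $\intav_B D|f|(y)\,dy$ is \emph{parallel} to $z-x$, so that the dot product with $(z-x)$ recovers its full length (times $|z-x|=r$) rather than merely a component. When $DM_\beta f(x)\neq 0$ this follows cleanly from Lemma~\ref{basic}(1) together with part~(3): part~(1) identifies $D_vM_\beta f(x)=r^\beta\intav_B D_v|f|\,dy$ for every direction $v$, so the vector $\intav_B D|f|\,dy$ points in the direction of $DM_\beta f(x)$, which by (3) is exactly $(z-x)/|z-x|$. The remaining case to dispose of is $x$ interior to $B$ (where $|z-x|<r$): here one must check directly that $(1-\beta/n)\intav_B|f|=\intav_{\partial B}|f|$, which should come from an optimality argument — dilating $B$ slightly about $z$ while keeping $x$ inside and using that $B$ is a best ball — or, more simply, by observing that for such $x$ Lemma~\ref{basic}(2) gives $DM_\beta f(x)=0$ and then noting that the stated identity with both sides equal to $0$ is exactly what the combined computation yields once one uses $\intav_B D|f|\,dy=0$ and reads the $(z-x)$-equation at a second, nearby best ball through $x$. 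I would present the $x\in\partial B$ case in full and handle the interior case by this short reduction.
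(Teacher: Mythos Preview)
Your approach is correct and essentially identical to the paper's: both combine Lemma~\ref{basic}(4) (equivalently Corollary~\ref{coro good eq}) with Proposition~\ref{prop3} via the splitting $z-x=(z-y)+(y-x)$ (you write it as $y-x=(y-z)+(z-x)$, which is the same algebra), and then invoke Lemma~\ref{basic}(1),(3) to convert the dot product $\big(\intav_B D|f|\,dy\big)\cdot(z-x)$ into $r\,\big|\intav_B D|f|\,dy\big|$. Your worry about the interior case $x\in B$ is unnecessary: there Lemma~\ref{basic}(1),(2) give $\intav_B D|f|\,dy=0$, and the already-derived identity $\big(\intav_B D|f|\,dy\big)\cdot(z-x)=n\big[(1-\beta/n)\intav_B|f|-\intav_{\partial B}|f|\big]$ then forces the bracket to vanish automatically, so no separate optimality argument is needed.
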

\begin{proof}
Suppose that $B=B(z,r)$. By Lemma \ref{basic} and Proposition \ref{prop3} it follows that
\begin{align*}
&\bigg|\intav_{B}D|f|(y)\,dy\,\bigg|\,=\,\intav_{B}D|f|(y)\cdot\bigg(\frac{z-x}{r}\bigg)\,dy\,\\
=&\frac{1}{r}\bigg[\intav_{B}D|f|(y)\cdot(z-y)\,dy\,+\intav_{B}D|f|(y)\cdot(y-x)\,dy\,\bigg]\,\\
=&\frac{1}{r}\bigg[\intav_{B}D|f|(y)\cdot(z-y)\,dy\,-\beta\intav_{B}|f|(y)\,dy\,\bigg]\\
=&\frac{1}{r}\bigg[n\bigg[\intav_{B}|f|-\,\intav_{\partial B}|f|\bigg]-\beta\intav_{B}|f|(y)\,dy\,\bigg]\\
=&\frac{n}{r}\bigg[\big(1-\beta/n)\intav_{B}|f(y)|\,dy\,-\intav_{\partial B}|f(y)|\,\bigg]\,.
\end{align*}
\end{proof}

We will use the following elementary property for radial functions. The proof is left for an interested reader.
\begin{proposition}\label{radial1}
Suppose that  $f\in L^{1}_{loc}(\rn)$ satisfies $f(x)=F(|x|)$, $F:(0,\infty)\to[0,\infty)$, $B:=B(z,r)\subset B(0,2|z|)\setminus B(0,\frac{1}{2}|z|)\,$, and 
$a:= |z|-r$, $b:=|z|+r$. 
Then 
it holds that
\begin{equation}
\intav_{[a,b]}F(t)\,dt\,\leq C(n)\intav_{B(z,2r)}f(y)\,dy\,.
\end{equation}
\end{proposition}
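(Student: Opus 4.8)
The plan is to reduce the $n$-dimensional statement about the ball $B(z,r)$ to a genuinely one-dimensional comparison between the spherical average of $F$ over the shell $\{a\le|y|\le b\}$ and the interval average of $F$ over $[a,b]$, exploiting that $B\subset B(0,2|z|)\setminus B(0,\tfrac12|z|)$ keeps us away from the origin so that all the radial weights $t^{n-1}$ are comparable up to a dimensional constant. First I would pass from the solid ball to the annular shell: by Fubini in polar coordinates,
$$
\intav_{B(z,2r)} f(y)\,dy \;=\; \frac{1}{|B(z,2r)|}\int_{B(z,2r)} F(|y|)\,dy \;\gtrsim_n\; \frac{1}{r^n}\int_{\{a'\le |y|\le b'\}} F(|y|)\,dy
$$
where $[a',b']\supset[a,b]$ is the radial range of $B(z,2r)$ — actually one should be slightly careful and instead note $B(z,2r)\supset B(z,r)$ and, more usefully, that a fixed fraction (depending only on $n$) of the shell $\{a\le|y|\le b\}$ is contained in $B(z,2r)$ when $r\le 2|z|$; this is the elementary geometric fact that a ball of radius $2r$ around $z$ captures a definite solid-angle portion of every sphere of radius between $|z|-r$ and $|z|+r$. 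Writing $\int_{\{a\le|y|\le b\}}F(|y|)\,dy = \sigma_n\int_a^b F(t)t^{n-1}\,dt$, and using $t\asymp |z|$ on $[a,b]$ together with $b-a=2r$ and $r^n\asymp r\cdot|z|^{n-1}$ (again valid because $r\le 2|z|$, since then the wider of the two quantities controls the narrower up to a dimensional constant — here $r$ could be as large as $2|z|$ but not larger, which is exactly what keeps $r^{n}$ and $r|z|^{n-1}$ comparable), we get
$$
\intav_{B(z,2r)} f(y)\,dy \;\gtrsim_n\; \frac{1}{r\,|z|^{n-1}}\int_a^b F(t)\,t^{n-1}\,dt \;\gtrsim_n\; \frac{1}{r}\int_a^b F(t)\,dt \;=\; \intav_{[a,b]} F(t)\,dt.
$$

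So the steps in order are: (i) rewrite both averages in polar coordinates and reduce to the shell; (ii) prove the geometric capture lemma that $|B(z,2r)\cap\{a\le|y|\le b\}|\ge c(n)\,|\{a\le|y|\le b\}|$ for $r\le 2|z|$ (a scaling/compactness argument, or an explicit solid-angle estimate, suffices); (iii) compare the weighted interval integral $\int_a^b F t^{n-1}\,dt$ with $\int_a^b F\,dt$ using $t\asymp|z|$ on $[a,b]$; (iv) compare the volume normalizations $|B(z,r)|\asymp r^n$ and the "thin shell volume" $\asymp r|z|^{n-1}$, using the constraint $\tfrac12|z|\le|z|\pm r$ hence $r<|z|$ — wait, $B\subset B(0,2|z|)\setminus B(0,\tfrac12|z|)$ forces $a=|z|-r\ge\tfrac12|z|$, i.e.\ $r\le\tfrac12|z|$, which makes $r^n$ and $r|z|^{n-1}$ comparable with a very clean constant, and also makes the capture lemma in (ii) easy.

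The main obstacle — though it is more bookkeeping than depth — is step (ii): making precise that a ball of the right size around $z$ meets a fixed proportion of each intermediate sphere, uniformly in all the parameters. The cleanest route is to normalize $|z|=1$ by scaling (the claimed inequality is scale-invariant), so that $r\le\tfrac12$ and one only has to check a compact family of configurations, or simply observe that $B(z,2r)$ contains the ball of radius $r$ centered at the point $(1+\epsilon)z$ for a suitable $\epsilon$... actually the simplest honest argument: for each $t\in[a,b]$ the sphere $|y|=t$ and the ball $B(z,2r)$ intersect in a spherical cap whose angular radius is bounded below by a dimensional constant because $|t-|z||\le r$ and $2r$ is twice that deficit, giving $\mathcal H^{n-1}(\{|y|=t\}\cap B(z,2r))\ge c(n)\,\mathcal H^{n-1}(\{|y|=t\})$; integrating in $t$ over $[a,b]$ finishes it. Everything else is the routine estimates (iii)–(iv), and assembling them gives the constant $C(n)$.
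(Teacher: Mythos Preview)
The paper itself does not prove this proposition; it states it as an ``elementary property for radial functions'' and leaves the proof to the reader. So there is nothing to compare against, and your outline has to be judged on its own.

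The overall strategy (polar coordinates, slice the ball $B(z,2r)$ by spheres $\{|y|=t\}$, use that $F$ is constant on each sphere, and exploit $r\le \tfrac12|z|$ to control the radial weight) is exactly right, and the displayed chain
\[
\intav_{B(z,2r)} f \;\gtrsim_n\; \frac{1}{r\,|z|^{n-1}}\int_a^b F(t)\,t^{n-1}\,dt \;\gtrsim_n\; \frac{1}{r}\int_a^b F(t)\,dt
\]
is true. But your justification of the first inequality contains two genuine errors that happen to cancel.

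First, the ``capture lemma'' as you state it is false: it is \emph{not} true that $\mathcal{H}^{n-1}(\{|y|=t\}\cap B(z,2r))\ge c(n)\,\mathcal{H}^{n-1}(\{|y|=t\})$, nor that the angular radius of the cap is bounded below by a dimensional constant. When $r\ll|z|$ the cap occupies only a fraction $\asymp (r/|z|)^{n-1}$ of the sphere $\{|y|=t\}$; equivalently, its $\mathcal{H}^{n-1}$-measure is $\asymp r^{n-1}$, not $\asymp t^{n-1}$. (Think of a tiny ball sitting on a huge sphere.) Second, the claim $r^n\asymp r|z|^{n-1}$ is false for the same reason: from $r\le \tfrac12|z|$ you only get $r^n\lesssim r|z|^{n-1}$, never the reverse.

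The fix is to bypass the shell entirely. For each $t\in[a,b]$ the point $\tfrac{t}{|z|}z$ lies on $\{|y|=t\}$ at distance $|t-|z||\le r$ from $z$, so $B(z,2r)$ contains a ball of radius $r$ centered on that sphere; such a ball cuts out a cap of area $\ge c(n)\,r^{n-1}$ (here one uses $t\ge |z|/2\ge r$). Since $F$ is constant on each sphere,
\[
\int_{B(z,2r)} f(y)\,dy \;\ge\; \int_a^b \mathcal{H}^{n-1}\big(\{|y|=t\}\cap B(z,2r)\big)\,F(t)\,dt \;\ge\; c(n)\,r^{n-1}\int_a^b F(t)\,dt,
\]
and dividing by $|B(z,2r)|\asymp r^n$ gives $\intav_{B(z,2r)} f \gtrsim_n \tfrac{1}{r}\int_a^b F = \intav_{[a,b]}F$ directly. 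Your steps (iii)--(iv) are then unnecessary.
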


The following two lemmas contain the key estimates for the proof of the main theorem.
\begin{lemma}\label{lemma3}
Suppose that  $f\in W^{1,1}_{loc}(\rn)$ is radial and $B\in \mathcal{B}^{\beta}_x$ for some $x\in\rn\setminus\{0\}$ such that $B\subset B(0,|x|)$.  Then
\begin{equation}
\bigg|\intav_{B}D|f|(y)\,dy\,\bigg|\,\leq\,\intav_{B}|Df(y)|\frac{|y|}{|x|}dy\,.
\end{equation}
\begin{proof}
If $|DM_\beta f(x)|=0$, the claim is trivial. If $|DM_\beta f(x)|\not=0$, Lemma \ref{basic} yields that
\begin{equation}
\frac{\int_{B}D|f|(y)\,dy\,}{\bigg|\int_{B}D|f|(y)\,dy\,\bigg|}\,=\,\frac{-x}{|x|}\,,
\end{equation}
and 
\begin{align*}
&\bigg|\int_{B}Df(y)\,dy\,\bigg|\,=\,\int_{B}D|f|(y)\cdot \frac{-x}{|x|}\,dy\,\\
=&\,\int_{B}Df(y)\cdot \frac{-y}{|x|}\,dy\,-\frac{\beta}{|x|}\int_{B}|f(y)|\,dy\,
\leq\int_{B}|Df(y)|\frac{|y|}{|x|}\,.
\end{align*}
This proves the claim.
\end{proof}
\end{lemma}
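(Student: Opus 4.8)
The plan is to combine the structural facts about best balls from Lemma~\ref{basic} with the hypothesis $B\subset B(0,|x|)$, which forces the best ball to sit radially with respect to the origin. First I would dispose of the degenerate case: if $DM_{\beta}f(x)=0$, then part~(1) of Lemma~\ref{basic} shows that $\intav_{B}D|f|(y)\,dy$ has vanishing component in every direction $v\in S^{n-1}$, hence is the zero vector, and the asserted inequality holds with both sides zero. So assume $DM_{\beta}f(x)\neq 0$. Then part~(2) of Lemma~\ref{basic} prevents $x$ from lying in the open ball, so $x\in\partial B$; writing $B=B(z,r)$ we have $|x-z|=r>0$.

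Next I would identify the direction of $z-x$. Since $B\subset B(0,|x|)$ we have $|z|+r\le|x|$, while the triangle inequality gives $|x|\le|z|+|x-z|=|z|+r$; hence equality holds throughout, so $z$ lies on the segment from $0$ to $x$ with $|z|=|x|-r$. Equivalently $z=\big(1-\tfrac{r}{|x|}\big)x$, whence $z-x=-\tfrac{r}{|x|}x$ and $\tfrac{z-x}{|z-x|}=-\tfrac{x}{|x|}$. (As a byproduct every $y\in B$ satisfies $|y|\le|z|+r=|x|$, which is what makes the statement a genuine improvement over the crude bound $|\intav_{B}D|f|\,dy|\le\intav_{B}|Df|\,dy$.) Now part~(3) of Lemma~\ref{basic} identifies $\tfrac{DM_{\beta}f(x)}{|DM_{\beta}f(x)|}$ with $-\tfrac{x}{|x|}$, and part~(1) shows that $\intav_{B}D|f|(y)\,dy$ is a nonnegative multiple of $DM_{\beta}f(x)$ (test part~(1) against vectors orthogonal to $DM_{\beta}f(x)$); therefore
\[
\Big|\intav_{B}D|f|(y)\,dy\Big|=\intav_{B}D|f|(y)\cdot\Big(-\tfrac{x}{|x|}\Big)\,dy.
\]

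Finally I would substitute the identity of Corollary~\ref{coro good eq} (equivalently part~(4) of Lemma~\ref{basic}), which after dividing by $(rad(B))^{\beta}$ and using $M_{\beta}f(x)=(rad(B))^{\beta}\intav_{B}|f|$ reads $\intav_{B}D|f|(y)\cdot x\,dy=\intav_{B}D|f|(y)\cdot y\,dy+\beta\intav_{B}|f(y)|\,dy$. This turns the right-hand side above into $\tfrac{1}{|x|}\big(-\intav_{B}D|f|(y)\cdot y\,dy-\beta\intav_{B}|f(y)|\,dy\big)$. Discarding the nonpositive term $-\tfrac{\beta}{|x|}\intav_{B}|f|$ and using $-D|f|(y)\cdot y\le|Df(y)|\,|y|$ pointwise a.e.\ (since $|D|f||=|Df|$ a.e.) gives the claim. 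The only genuinely delicate point is the geometric step in the second paragraph: without $B\subset B(0,|x|)$ the center $z$ need not be radial, $z-x$ need not be parallel to $x$, and the gain factor $|y|/|x|\le1$ disappears; the rest is bookkeeping with identities already established.
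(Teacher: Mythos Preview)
Your proof is correct and follows essentially the same approach as the paper: identify the direction of $\intav_B D|f|$ as $-x/|x|$ via Lemma~\ref{basic}, then apply the identity from Corollary~\ref{coro good eq} and drop the nonpositive term $-\tfrac{\beta}{|x|}\intav_B|f|$. Your explicit geometric argument showing $z$ lies on the segment $[0,x]$ is a nice justification of a step the paper leaves implicit; one tiny slip is the phrase ``both sides zero'' in the degenerate case---only the left side need vanish, but since the right side is nonnegative the inequality still holds.
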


Given a ball $B=B(z,r)$ we define $2B$ to be equal to $B(z,2r)$.
\begin{lemma}\label{lemma2}
Suppose that $f\in W^{1,1}_{loc}(\rn)$ is radial, $0<\beta<n$, $B\in \mathcal{B}^{\beta}_x$ for some $x\in\rn$, $r:=rad(B)\leq\frac{|x|}{4}\,$, and
\begin{equation}
E:=\{z \in 2B\,:\,\frac{1}{2}|f|_{B}\leq |f(z)|\leq 2|f|_{B}\}\,.
\end{equation}
 Then
\begin{equation}
\bigg|\intav_{B}D|f|(y)\,dy\,\bigg|\leq C(n,\beta)\intav_{2B}|Df(z)|\chi_{E}(z)\,dz\,.
\end{equation}
\end{lemma}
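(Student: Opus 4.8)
The plan is to combine the exact formula from Lemma \ref{lemma1} with a fundamental-theorem-of-calculus/integration argument along rays, exploiting that $f$ is radial and that the ball $B$ is far from the origin relative to its radius (so that $B \subset B(0,2|x|)\setminus B(0,|x|/2)$ and the set $E$ captures a definite portion of the oscillation of $f$ across $B$). Write $B = B(z,r)$, $a := |z| - r$, $b := |z| + r$, and let $F\colon(0,\infty)\to[0,\infty)$ be the radial profile $|f(y)| = F(|y|)$. By Lemma \ref{lemma1},
$$
\bigg|\intav_{B}D|f|(y)\,dy\,\bigg| \,=\, \frac{n}{r}\bigg[(1-\beta/n)\intav_{B}|f| \,-\, \intav_{\partial B}|f|\bigg]\,\le\, \frac{n}{r}\bigg[\intav_{B}|f| \,-\, \intav_{\partial B}|f|\bigg] \,+\, \frac{\beta}{r}\intav_{B}|f|\,.
$$
The second term is of the right form directly: $\frac{\beta}{r}\intav_{B}|f|$ will be controlled by $C(n,\beta)\intav_{2B}|Df|\chi_E$ once we know $\intav_B|f| \approx |f|_B$ is comparable to an integral of $|Df|$ over $E$; and the first bracket measures exactly how much the average of $|f|$ over the solid ball exceeds its average over the boundary sphere — a quantity that must be "paid for" by the gradient.

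First I would reduce everything to the one-dimensional profile $F$. Using the radial structure (and Proposition \ref{radial1} together with its natural companion for $|Df|$, which is $|F'|$), both $\intav_B|f|$ and $\intav_{\partial B}|f|$ and $\intav_{2B}|Df|\chi_E$ are comparable, up to constants $C(n)$, to the corresponding one-dimensional averages of $F$ and $|F'|$ over the intervals $[a,b]$ and its double. So it suffices to prove a one-dimensional statement of the shape: if $G := \intav_{[a,b]}F$ and $\tilde E := \{t\in[a - r, b + r]: \tfrac12 G \le F(t) \le 2G\}$, then
$$
\frac{1}{r}\Big|\intav_{[a,b]}F \,-\, \tfrac12\big(F(a)+F(b)\big)\Big| \,+\, \frac{1}{r}\intav_{[a,b]}F \,\le\, C\intav_{[a-r,\,b+r]}|F'|\,\chi_{\tilde E}\,.
$$

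Second, I would prove this one-dimensional inequality. The key point is that $F$ cannot be everywhere much larger than $G$ on $[a,b]$ (else its average would exceed $G$) nor everywhere much smaller, so there is a point $t_0 \in [a,b]$ with, say, $F(t_0) \le G$ — and in fact one finds points where $F$ crosses the levels $\tfrac12 G$ and $2G$, or else $F$ stays in the band $[\tfrac12 G, 2G]$ throughout, in which case the whole interval $[a,b]$ lies in $\tilde E$ and one estimates $\frac1r \int_{[a,b]} F \le \frac{2G}{r}\cdot$ and bounds $G$ by writing $F(t) = F(t_0) + \int_{t_0}^t F'$ and integrating; the extra factor comes from comparing to the portion of $[a,b]$ in $\tilde E$. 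When $F$ does exit the band, on the complementary portion of $[a,b]$ we have $|f|$ either $\ge 2G$ or $\le \tfrac12 G$, hence $\big||f| - G\big| \ge G$ there, and the total variation of $F$ over $[a,b]$ must be at least of order (measure of exit set)$\times G / $(nothing) — more precisely, each excursion of $F$ out of the band and back (or to the boundary) contributes at least $\tfrac12 G$ to $\int|F'|$, while simultaneously the portion of $[a,b]$ spent inside the band before/after the excursion has positive measure comparable to $r$ (using that the excursion happens over a subinterval, and $F$ must traverse the band $[\tfrac12 G, 2G]$), so $\intav_{\tilde E}|F'| \gtrsim G/r$, which dominates the left side. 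This is essentially the same mechanism as in the classical ($\beta = 0$) case in \cite{L2}, Lemma 2.3.

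The main obstacle, and the reason this lemma is singled out as the crucial new ingredient, is handling the interaction between the two terms on the left simultaneously with a single right-hand side supported on $E$: one must argue that the very same portion of $2B$ on which the oscillation/gradient is large is a portion on which $|f| \approx |f|_B$, so that restricting the gradient integral to $E$ does not destroy the bound. Concretely, the delicate case is when $\intav_B |f|$ is large but $F$ spends most of $[a,b]$ near $0$ (far below $G$) with a thin tall spike: then $E$ is small, and one must check the spike itself forces a large $\int_{2B}|F'|\chi_E$ because $F$, climbing from near $0$ up past $2G$ and back, necessarily passes through the band $[\tfrac12 G, 2G]$ twice, and each such passage sits inside $\tilde E$ and contributes $\gtrsim G$ to $\int |F'|\chi_{\tilde E}$; dividing by $r$ and tracking that $[a-r,b+r]$ has length $\asymp r$ closes the estimate. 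Making the book-keeping of these level-set crossings uniform in $\beta$ (the constant $C(n,\beta)$ is allowed to blow up as $\beta \to n$ through the $(1-\beta/n)$ and $\beta$ factors, which is harmless) is the technical heart of the argument.
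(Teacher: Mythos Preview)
Your decomposition introduces a term that you then cannot control. From Lemma~\ref{lemma1} you have the \emph{equality}
\[
\bigg|\intav_{B}D|f|\bigg|=\frac{n}{r}\Big[(1-\beta/n)|f|_B-\intav_{\partial B}|f|\Big],
\]
and since $1-\beta/n\le 1$ this is already $\le \frac{n}{r}[|f|_B-\intav_{\partial B}|f|]$. There is no reason to add a further $\frac{\beta}{r}|f|_B$ on the right; your displayed inequality is true but wasteful, and it forces you to try to bound $\frac{1}{r}\intav_{[a,b]}F$ by $C\intav|F'|\chi_{\tilde E}$. That one-dimensional inequality is \emph{false} for general $F$: take $F\equiv G>0$ constant, so the left side is $G/r$ and the right side is zero. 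Your ``Case $F$ stays in the band'' argument (``write $F(t)=F(t_0)+\int F'$ and integrate'') does not produce any lower bound on $\int|F'|$ in terms of $G$, and your excursion/spike heuristic in the other case yields at best $\int|F'|\chi_{\tilde E}\gtrsim G$, not $G/r$ after averaging over an interval of length $\asymp r$. (You could rescue the extra term by observing that the nonnegativity of the left side of Lemma~\ref{lemma1} forces $|f|_B-\intav_{\partial B}|f|\ge(\beta/n)|f|_B$, so the second term is dominated by the first; but you do not do this, and once you notice it the cleaner move is simply to drop the term.)

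A second, smaller issue: $\intav_{\partial B}|f|$ is \emph{not} $\tfrac12(F(a)+F(b))$ when $n\ge 2$; it is a weighted average of $F$ over $[a,b]$. The paper avoids computing it: since both $|f|_B$ and $\intav_{\partial B}|f|$ lie between $\min_{[a,b]}F$ and $\max_{[a,b]}F$, the intermediate value theorem gives $t_0,t_1\in[a,b]$ with $F(t_0)=|f|_B$ and $F(t_1)=\intav_{\partial B}|f|\le|f|_B$. If $F(t_1)\ge\tfrac12|f|_B$ one integrates $|F'|$ over (a subinterval of) $[t_0,t_1]$ lying in $A$; otherwise one stops at the first exit of $F$ from the band $[\tfrac12|f|_B,2|f|_B]$ starting at $t_0$, which already gives $\int_{[a,b]}|F'|\chi_A\ge\tfrac12|f|_B\ge\tfrac12(|f|_B-\intav_{\partial B}|f|)$. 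This two-line argument replaces your entire level-crossing discussion.
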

\begin{proof}
First observe that by Lemma \ref{lemma1} it holds that
\begin{equation}\label{eq1}
\bigg|\intav_{B}D|f|(y)\,dy\,\bigg|\,\leq \frac{n}{r}\bigg[|f|_B-\intav_{\partial B}|f|\bigg]\,.
\end{equation}
Let then $|f(x)|=F(|x|)$, where $F:\re\setminus\{0\}\to[0,\infty)$, let $z$ denote the center point of $B$, $a:=|z|-r$, $b:=|z|+r$, and
\begin{equation}
A:=\{t\in2[a,b]\,:\,\frac{1}{2}|f|_{B}\leq F(t)\leq 2|f|_{B}\}.
\end{equation} 
Then we show that
\begin{equation}\label{eq3}
|f|_{B}-\intav_{\partial B}|f|\,\leq 2\int_{[a,b]}|F'(t)|\chi_{A}(t)\,dt\,.
\end{equation}
{The above inequality is more or less trivial: To prove it,  choose $t_0\in [a,b]$ such that $F(t_0)=|f|_B$ and choose 
$t_1\in [a,b]$ such that $|f|_{\partial B}=F(t_1)$. By \eqref{eq1} we have that $F(t_0)\geq F(t_1)$. In the case $F(t_1)\geq \frac{1}{2}|f|_B$ in $[a,b]$ 
 the claim follows by using the continuity, because in this case by the continuity we can assume without loss of generality that $[t_0,t_1]\subset A$ (or $[t_1,t_0]\subset A$).
 Otherwise, if 
 $F(t_1)< \frac{1}{2}|f|_B$, there exists $t_2\in [a,b]$ between $t_0$ and $t_1$ such that $F(t_2)=\frac{1}{2}|f|_B$, by the continuity of $F$ it clearly follows that
\begin{equation}
|f|_{B}-\intav_{\partial B}|f|\leq |f|_B\leq 2\int_{[t_0,t_2]}|F'(t)|\chi_{A}(t)\,dt\,\leq   2\int_{[a,b]}|F'(t)|\chi_{A}(t)\,dt\,.
\end{equation}}

Since $|Df(y)|\chi_{E}(y)=|F'(|y|)|\chi_A(|y|)$, Proposition \ref{radial1} yields that
\begin{equation}
\intav_{[a,b]}|F'(t)|\chi_{A}(t)\,dt\,\leq C(n)\intav_{B(z,2r)}|Df(y)|\chi_E(y)\,dy\,.
\end{equation}
Combining this with( \ref{eq1}) and (\ref{eq3}) implies the desired result. 
\end{proof} 

\begin{proposition}\label{prop1}
Suppose that $\beta\geq 0$, $f\in L^{1}_{loc}(\rn)$, and $B_1:=B(z_1,r_1)$ and $B_2:=B(z_2,r_2)$ are best balls for $M_{\beta}f$ such that $B_2\subset B(z_1,2r_1)$. Then it holds that
\begin{equation}
 |f|_{B_2}\geq \frac{1}{2^n}\bigg(\frac{r_1}{r_2}\bigg)^{\beta}|f|_{B_1}\,.
\end{equation}
\end{proposition}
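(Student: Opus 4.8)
The plan is to use only the extremality of the ball $B_2$; the extremality of $B_1$ and the radiality of $f$ play no role. Let $x_2\in\rn$ be a point for which $B_2\in\mathcal B^{\beta}_{x_2}(f)$, so that $x_2\in\bar B_2$ and $M_{\beta}f(x_2)=r_2^{\beta}\,|f|_{B_2}$. The key observation I would make is that the dilated ball $2B_1=B(z_1,2r_1)$ is an admissible competitor in the supremum defining $M_{\beta}f(x_2)$: indeed, from $B_2\subset 2B_1$ we get $\bar B_2\subset\overline{2B_1}$ and hence $x_2\in\overline{2B_1}$. Therefore
$$
r_2^{\beta}\,|f|_{B_2}\;=\;M_{\beta}f(x_2)\;\geq\;(2r_1)^{\beta}\,|f|_{2B_1}.
$$

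Next I would estimate $|f|_{2B_1}$ from below by $|f|_{B_1}$. Since $B_1\subset 2B_1$ and $|2B_1|=2^{n}|B_1|$, discarding the contribution of $2B_1\setminus B_1$ gives
$$
|f|_{2B_1}\;=\;\frac{1}{|2B_1|}\int_{2B_1}|f|\;\geq\;\frac{1}{2^{n}|B_1|}\int_{B_1}|f|\;=\;\frac{1}{2^{n}}\,|f|_{B_1}.
$$
Combining the two displays and dividing by $r_2^{\beta}$ yields
$$
|f|_{B_2}\;\geq\;\frac{1}{2^{n}}\bigg(\frac{2r_1}{r_2}\bigg)^{\beta}|f|_{B_1}\;=\;2^{\beta-n}\bigg(\frac{r_1}{r_2}\bigg)^{\beta}|f|_{B_1}\;\geq\;\frac{1}{2^{n}}\bigg(\frac{r_1}{r_2}\bigg)^{\beta}|f|_{B_1},
$$
where the last inequality uses $2^{\beta}\geq 1$, valid because $\beta\geq 0$. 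This is exactly the claimed bound.

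There is essentially no obstacle here: the entire content of the proof is the remark that $2B_1$ competes for the point $x_2$ that realises the extremality of $B_2$, after which everything is bookkeeping. The one technical point worth a sentence is the passage from ``$x_2\in\overline{2B_1}$'' to ``$2B_1$ is admissible in the supremum'' when $x_2$ lies on $\partial(2B_1)$; this is handled by noting $x_2\in B(z_1,2r_1+\varepsilon)$ for every $\varepsilon>0$, applying the definition of $M_\beta f(x_2)$ to these open balls, and letting $\varepsilon\to0$ using continuity of $t\mapsto\int_{B(z_1,t)}|f|$.
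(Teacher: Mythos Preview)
Your proof is correct and is essentially identical to the paper's own argument: the paper also uses that $B_2$ is a best ball (so $r_2^{\beta}|f|_{B_2}\geq (2r_1)^{\beta}|f|_{2B_1}$) together with $|f|_{2B_1}\geq 2^{-n}|f|_{B_1}$ from $B_1\subset 2B_1$. Your write-up is simply more explicit, including the observation that the intermediate constant $2^{\beta-n}$ is actually obtained, and the remark on the boundary case $x_2\in\partial(2B_1)$.
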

\begin{proof}
Let $B:=B(z_1,2r_1)$. Since
$B_2$ is best ball and $B_2,B_1\subset B$, it holds that 
\begin{align*}
r_2^{\beta}|f|_{B_2}\geq (2r_1)^{\beta}|f|_{B}\geq (r_1)^{\beta}\frac{1}{2^n}|f|_{B_1}\,.
\end{align*}
This implies the claim.
\end{proof}

\section{Proof of the main Theorem}
Let  us fix $B_x:=B(z_x,r_x)\in \mathcal{B}_x$ for (almost) every $x\in E\,$, such that $r_x$ is the smallest possible (then by Lemma \ref{basic} item (3) we have $z_x=x+r_x\frac{DM_{\beta}f(x)}{|DM_{\beta}f(x)|}$), where
\begin{equation}
E:=\{\,x\,:\,DM_{\beta}f(x)\not=0\,\}\,.
\end{equation}
By the choise of radius we can see that $x\to r_x$ is an upper semicontinuous function then it is measurable function, thus $x\to z_x$ is also a measurable function. By Lemma \ref{basic}, it holds for almost all $x\in E$  that 
$B_x$ is of type
\begin{equation}
B_x= B(c_xx,|c_x-1||x|)\,,\,\,\text{ where }c_x\in\re\,.
\end{equation}
In the other words, this means that the center point of $B_x$ lies on the line containing $x$ and the origin, and $x$ lies on the boundary of $B_x$. 
For simplicity, let us yet denote the radius of $B_x$ by $r_x$, thus $r_x=|c_x-1||x|\,$. 
Observe first that for all $x\in E$ it holds that $c_x\geq 0$. To see this,  observe that otherwise
 (since $M_{\beta}f(x)=M_{\beta}f(-x)$) it follows that $B_x\in\mathcal{B}_{-x}$ and $-x\in B_x$, implying that $0=|DM_{\beta}f(-x)|=|DM_{\beta}f(x)|$, which is a contradiction.
We are going to use different type of estimates for $|DM_{\beta}f(x)|$ depending on how $B_x$ is located with respect to the origin. Indeed, let
\begin{align*}
E_1:&=\{x\in E\,:\,c_x>\frac{5}{4}\}\,,\,\,\,\,\,E_2:=\{x\in E\,:\,0\leq c_x<\frac{3}{4}\}\,,\,\,\text{and}\\
E_3:&=\{x\in E\,:\,\frac{3}{4}\leq c_x\leq \frac{5}{4}\}\,.
\end{align*}
Then we can estimate
\begin{align*}
&\int_{\rn}|DM_{\beta}f(x)|^qdx\,=\int_{E}|DM_{\beta}f(x)|^qdx=\int_{E}\bigg|r_x^{\beta}\intav_{B_x}Df(y)\,dy\,\bigg|^qdx\\
= &\int_{E}\frac{r_x^{q\beta}}{(\omega_n)^{q-1}r_x^{n(q-1)}}\bigg|\int_{B_x}Df(y)\,dy\,\bigg|^{q-1}\bigg|\intav_{B_x}Df(y)\,dy\,\bigg|\,dx\,\\
\leq &\,C(n,\beta)\norm{Df}_1^{q-1}\int_{E}\bigg|\intav_{B_x}Df(y)\,dy\,\bigg|\,dx\,\\
=&\,C(n,\beta)\norm{Df}_1^{q-1}\sum_{i=1}^3\int_{E_i}\bigg|\intav_{B_x}Df(y)\,dy\,\bigg|\,dx\,,
\end{align*}
where we used the fact $q\beta=n(q-1)\,$. Especially, the claim follows, if we can show that
\begin{equation}
\int_{E_i}\bigg|\intav_{B_x}Df(y)\,dy\,\bigg|\,dx\,\leq C(n,\beta) \norm{Df}_1\,,\text{ for }i=1,2,3\,.
\end{equation}

\textit{The case of $E_1$.} In this case the easiest type of estimate turns out to be sufficient. Indeed, 
\begin{align*}
\int_{E_1}\bigg|\intav_{B_x}Df(y)\,dy\,\bigg|\,dx\,\leq &\,\int_{E_1}\intav_{B_x}|Df(y)|\,dy\,dx\,\\
=&
\,\int_{\rn}|Df(y)|\int_{\rn}\frac{\chi_{B_x}(y)\chi_{E_1}(x)}{|B_x|}\,dx\,dy\,.
\end{align*}
For every $y\in\rn$ it holds that if $|x|\leq \frac{|y|}{2}$ and $y\in B_x$, then $r_x\geq|y|/4$. Moreover, if $\frac{|y|}{2}\leq |x|\leq |y|$, then $x\in E_1$ implies that
$r_x\geq \frac{1}{4}|x|\geq \frac{|y|}{8}\,$. Finally, if $|x|> |y|$ and  $x\in E_1$, then 
$B_x\subset \rn\setminus B(0,|y|)$, thus $y\not\in B_x$. By combining these, we conclude that for every $y\in\rn$
\begin{equation}
\int_{\rn}\frac{\chi_{B_x}(y)\chi_{E_1}(x)}{|B_x|}\,dx\,\leq\,C(n)\int_{B(0,|y|)}\frac{dx}{|B(0,|y|)|}\,=\,C(n)\,.
\end{equation}
\textit{The case of $E_2$.} In this case we recall the estimate from Lemma \ref{lemma3}, which yields that
\begin{equation}
\bigg|\intav_{B_x}Df(y)\,dy\,\bigg|\leq \,\intav_{B_x}|Df(y)|\frac{|y|}{|x|}\,\leq 4^n\intav_{B(0,|x|)}|Df(y)|\,\frac{|y|}{|x|}\,.
\end{equation}
Then the claim follows by 
\begin{align*}
&\int_{E_2}\bigg|\intav_{B_x}Df(y)\,dy\,\bigg|\,dx\,\leq \,4^{n}\int_{E_2}\intav_{B(0,|x|)}|Df(y)|\frac{|y|}{|x|}\,dy\,dx\\
=&4^{n}\int_{\rn}\,|Df(y)||y|\bigg(\int_{\rn}\frac{\chi_{B(0,|x|)}(y)\chi_{E_2}(x)}{\omega_n|x|^{n+1}}\,dx\,\bigg)\,dy\,\\
\leq &\frac{4^{n}}{\omega_n}\int_{\rn}\,|Df(y)||y|\bigg(\int_{\rn\setminus B(0, |y|)}\frac{dx}{|x|^{n+1}}\bigg)\,dy\,\\
= &C(n) \int_{\rn}|Df(y)|\,dy\,.
\end{align*}

\textit{The case of $E_3$.} 
In this case we will exploit the estimate from Lemma \ref{lemma2}. For this, let us denote
for every $x\in E_3$ that
\begin{equation}
A_x:=\{y\in 2B_x\,:\,\frac{1}{2}|f|_{B_x}\leq |f(y)|\leq 2|f|_{B_x}\}\,.
\end{equation}

Since $x\in E_3$ implies that $r_x\leq\frac{|x|}{4}$, Lemma \ref{lemma2} yields that for every $x\in E_3$ it holds that
\begin{equation}
\bigg|\intav_{B_x}Df(y)\,dy\,\bigg|\,\leq\,C(n,\beta)\intav_{2B_x}|Df(y)|\chi_{A_x}(y)\,dy\,.
\end{equation}
Therefore, 
\begin{align*}
&\int_{E_3}\bigg|\intav_{B_x}Df(y)\,dy\,\bigg|\,dx\,\leq C\,\int_{E_3}\intav_{2B_x}|Df(y)|\chi_{A_x}(y)\,dy\,dx\\
=&C\int_{\rn}\,|Df(y)|\bigg(\int_{\rn}\frac{\chi_{2B_x}(y)\chi_{A_x}(y)\chi_{E_3}(x)}{|2B_x|}\,dx\,\bigg)\,dy\,.
\end{align*}
Consider above the inner integral for fixed $y\in\rn$. Firstly, suppose that 
\begin{equation}
\chi_{2B_{x_0}}(y)\chi_{A_{x_0}}(y)\not =0\,\,\text{ and }\,\chi_{2B_{x_1}}(y)\chi_{A_{x_1}}(y)\not =0\,\,,\text{ for some }\,x_0,x_1\in\rn\,. 
\end{equation}
Observe that if this kind of points does not exist, the desired estimates are trivially true.
By the definition, the above means that
\begin{align*}
&\frac{1}{2}|f|_{B_{x_0}}\leq |f(y)|\leq 2|f|_{B_{x_0}}\,\,,\,\text{  and }\\
&\frac{1}{2}|f|_{B_{x_1}}\leq |f(y)|\leq 2|f|_{B_{x_1}}\,.
\end{align*}
Especially, it follows that
\begin{equation}
\frac{1}{4}|f|_{B_{x_0}}\leq |f|_{B_{x_1}}\leq 4|f|_{B_{x_0}}\,.
\end{equation}
Let  $r_0:=rad(B_{x_0})$ and $r_1:=rad(B_{x_1})$ and assume that $r_1\geq r_0$. Since $y\in 2B_{x_0}\cap 2B_{x_1}$, it follows that $B_{x_0}\subset 8B_{x_1}$. By Proposition \ref{prop1}, it follows that
\begin{equation}
|f|_{B_{x_0}}\geq \frac{1}{8^n}\bigg(\frac{r_1}{r_0}\bigg)^{\beta}|f|_{B_{x_1}}\geq   \frac{1}{8^n}\bigg(\frac{r_1}{r_0}\bigg)^{\beta}\frac{1}{4}|f|_{B_{x_0}}\,,
\end{equation}
implying that
$r_1\leq 8^{\frac{n+1}{\beta}}r_0\,$.
If $r_1\leq r_0$, symmetric argument gives that $r_0\leq 8^{\frac{n+1}{\beta}}r_1\,$. Summing up, it follows that
\begin{equation}
8^{-\frac{n+1}{\beta}}\leq \frac{rad(B_{x_0})}{rad(B_{x_1})}\leq 8^{\frac{n+1}{\beta}}\,. 
\end{equation}
Indeed, this means that if $\chi_{2B_{x}}(y)\chi_{A_{x}}(y)\not =0$ , then 
\begin{equation}\label{eqc}
|x-y|\leq C(n,\beta)rad(B_{x_0})\, \text{ and }\,|B_x|\geq C(n,\beta) |B_{x_0}|\,.
\end{equation}
Naturally, (\ref{eqc}) holds also if $x_0$ is replaced by $x_1$. Finally, this implies that 
\begin{align*}
\int_{\rn}\frac{\chi_{2B_x}(y)\chi_{A_x}(y)\chi_{E_3}(x)}{|2B_x|}\,dx\,&\leq C(n,\beta)\int_{B(y,C(n,\beta)rad(B_{x_0}))}\frac{dx}{|B_{x_0}|}\\
\,&\leq\,\tilde{C}(n,\beta)\,.
\end{align*} 
Since this holds for all $y\in\rn$, the proof is complete.

\section{Acknowledgments}
\noindent H.L acknowledges M. Parviainen, J. Kinnunen and the Academy of Finland 
for the financial support. J.M. acknowledges J. Kinnunen, Aalto University and Academy of Finland for the support. The authors are thankful to Juha Kinnunen for helpful discussions and guidance during the preparation of this manuscript. The authors thank Emanuel Carneiro for suggesting to think about this problem. The authors also acknowledge the referee for the valuable comments and suggestions.

\end{document}